\documentclass[12pt]{article}
\usepackage{color}
\usepackage{graphicx}
\usepackage{colortbl}
\usepackage{array}
\usepackage{amssymb}
\usepackage{amsmath}
\usepackage{mathrsfs}
\usepackage{dcolumn}
\usepackage{longtable}
\usepackage{hhline}
\usepackage{graphics}
\usepackage{amssymb}
\usepackage{amsmath}
\usepackage{amsthm}
\usepackage{mathrsfs}
\usepackage{amsfonts}
\usepackage{latexsym}
\usepackage{color}
\usepackage{epsf}

\newtheorem{thm}{Theorem}[section]
\newtheorem{defn}[thm]{Definition}

\newtheorem{cor}[thm]{Corollary}

\newcommand{\rstf}[2]{\genfrac{[}{]}{0pt}{0}{#1}{#2}}

\def\S{\mathbf{S}}
\def\v{\mathbf{v}}

\title{\bf On the Unified Generalized $q$-Stirling Matrices and Their Matrix Decompositions}
\author{\large{\bf Amerah M. Dibagulun$^{1}$, Charles B. Montero$^{1}$,}\\\large{\bf and Mahid M. Mangontarum$^{1,2}$}\\
$^{1}$Department of Mathematics\\
Mindanao State University-Main Campus\\
Marawi City 9700\\
Philippines\\
$^{2}$Mamitua Saber Institute of Research and Creation\\
Mindanao State University-Main Campus\\
Marawi City 9700\\
Philippines\\
{\tt amerah.dibagulun@msumain.edu.ph}\\
{\tt charles.montero@msumain.edu.ph}\\
{\tt mangontarum.mahid@msumain.edu.ph}
}

\begin{document}

\maketitle

\begin{abstract}
	In this paper, the unified generalized $q$-Stirling matrices are defined in terms of the $q$-analogues of the unified generalized Stirling numbers. Several matrix relations involving these matrices are derived, and matrix decompositions of the unified generalized $q$-Stirling matrices are established. 
	
	\bigskip
	\noindent{\bf Keywords}: Matrix decomposition, unified generalized Stirling numbers, $q$-analogues, unified generalized $q$-Stirling numbers.
	
\end{abstract}

\section{Introduction}

The Stirling numbers of the first and second kinds were first introduced by Stirling \cite{Stir} in 1730. In combinatorics, the Stirling numbers of the first kind count the ways of arranging distinct items into a specified number of disjoint circles, whereas the Stirling numbers of the second kind count the ways of partitioning a set of distinct items into a specified number of nonempty, indistinguishable groups. Alternatively, these numbers can be defined as the coefficients in the expansions of factorials in terms of powers and vice versa. Specifically, for a positive integer $t$ and nonnegative integers $n$ and $k$ with $n\geq k$, the Stirling numbers of the first and second kinds, denoted by $s(n,k)$ and $S(n,k)$, respectively, are defined as the coefficients in the following expansions.
\begin{equation}
(t)_n=\sum_{k=0}^ns(n,k)t^k
\end{equation}  
and
\begin{equation}
t^n=\sum_{k=0}^nS(n,k)(t)_k,
\end{equation}
where
\begin{equation*}
(t)_n=t(t-1)(t-2)\cdots(t-(n-1)),\qquad (t)_0=1,
\end{equation*}
is the falling factorial of $t$ of order $n$.

In 1998, Hsu and Shuie \cite{Hsu} introduced a type of generalization by defining a pair of Stirling numbers as the coefficients of linear transformations between generalized factorials involving three arbitrary parameters. Using the notation 
\begin{equation*}
	\left\{S^1(n,k),S^2(n,k)\right\}\equiv
	\left\{S(n,k;\alpha,\beta,\gamma),S(n,k;\beta,\alpha,-\gamma)\right\},
\end{equation*}
the generalized Stirling numbers are defined by the following relations \cite[Equations (1) and (2)]{Hsu}:
\begin{equation}
	(t|\alpha)_n=\sum_{k=0}^{n}S^1(n,k)(t-\gamma|\beta)_k\label{Hsu1}
\end{equation}
and
\begin{equation}
	(t|\beta)_n=\sum_{k=0}^{n}S^2(n,k)(t+\gamma|\alpha)_k.\label{Hsu2}
\end{equation}	
In these equations, $n$ is a nonnegative integer, $\alpha,\beta,\gamma$ may be real or complex numbers, with $(\alpha,\beta,\gamma)\neq(0,0,0)$, and 
\begin{equation}
	(t|\alpha)_n=t(t-\alpha)(t-2\alpha)\cdots(t-(n-1)\alpha),\qquad (t|\alpha)_0=1,
\end{equation}
denotes the generalized factorial of $t$ with increment $\alpha$. It can also be verified that the classical Stirling numbers of the first and second kinds, the binomial coefficients $\binom{n}{k}$, and the classical Lah numbers $L(n,k)$ can be obtained from $S(n,k;\alpha,\beta,\gamma)$ as follows:
\begin{itemize}
	\item Stirling numbers of the first kind: $S(n,k;1,0,0)=s(n,k)$, 
	\item Stirling numbers of the second kind: $S(n,k;0,1,0)=S(n,k)$,
	\item Binomial coefficient: $S(n,k;0,0,1)=\binom{n}{k}$,
	\item Lah numbers: $S(n,k;-1,1,0)=L(n,k)$,
\end{itemize}
respectively, where $L(n,k)$ is defined by
\begin{equation}
	\left\langle t\right\rangle_n=\sum_{k=0}^{n}L(n,k)(t)_k
\end{equation}
and
\begin{equation*}
	\left\langle t\right\rangle_n=t(t+1)(t+2)\cdots(t+n-1)),\qquad \left\langle t\right\rangle_0=1,
\end{equation*}
is the rising factorial of $t$ of order $n$. 

Perhaps one of the most significant features of the generalized Stirling numbers is their ability to unify the various generalizations and extensions of the Stirling numbers introduced by different authors both before and after their formulation. In particular, the numbers considered by the authors listed below can be obtained from $S(n,k;\alpha,\beta,\gamma)$ by assigning suitable values to the three parameters $\alpha$, $\beta$, and $\gamma$:

\begin{itemize}
	\item Koutras' \cite{Kout} noncentral Stirling numbers:
	$$ S(n,k;1,0,a)=s_a(n,k),\qquad	S(n,k;0,1,-a)=S_a(n,k).$$
	These numbers are also equivalent to Broder's \cite{Bro} $r$-Stirling numbers, with $n$ and $k$ replaced by $n-r$ and $k-r$, respectively, and to Carlitz's \cite{Car2} weighted Stirling numbers.
	
	\item Carlitz's \cite{Car1} degenerate Stirling numbers:
	$$ S(n,k;1,\theta,0)=S_1(n,k\mid\theta),\qquad S(n,k;\theta,1,0)=S_2(n,k\mid\theta).$$
	
	\item Benoumhani's \cite{Ben1,Ben2} Whitney numbers of Dowling lattices:
	$$ S(n,k;m,0,-1)=w_m(n,k),\qquad S(n,k;0,m,1)=W_m(n,k).$$
		
	\item Mez\H{o}'s \cite{Mez1} $r$-Whitney numbers:
	$$ S(n,k;m,0,-r)=w_{m,r}(n,k),\qquad S(n,k;0,m,r)=W_{m,r}(n,k).$$
	These numbers are equivalent to the noncentral Whitney numbers introduced by Mangontarum et al. \cite{Mangontarum5} and to Corcino's \cite{Corcino1} $(r,\beta)$-Stirling numbers.
	
	\item Belbachir and Bousbaa's \cite{Bel} translated Whitney numbers:
	$$ S(n,k;\alpha,0,0)=\widetilde{w}_{(a)}(n,k),\qquad S(n,k;0,\alpha,0)=\widetilde{W}_{(a)}(n,k).$$
	These numbers were subsequently developed extensively by Mangontarum et al. \cite{Mangontarum,Mangontarum3,Mangontarum4}.
\end{itemize}

In 2012, Pan \cite{Pan} introduced a matrix decomposition that provides an explicit, nonrecursive computation of the values of the unified generalized Stirling numbers. More precisely, Pan \cite[Equation (8)]{Pan} obtained the following identity:
\begin{equation}
	\S_{\alpha,\beta,\gamma}=\S_{\alpha,0,0}\S_{0,0,\gamma}\S_{0,\beta,0},\label{Pan1}
\end{equation}
where $\S_{\alpha,\beta,\gamma}$ is the unified generalized Stirling (transform) matrix with real parameters $\alpha$, $\beta$, and $\gamma$, defined as an infinite-dimensional lower triangular matrix whose $(n,k)^{\rm th}$ entries are the unified generalized Stirling numbers $S(n,k;\alpha,\beta,\gamma)$, with $n,k=0,1,2,\ldots$. This matrix is shown to satisfy
\begin{equation}
	\v_\alpha(x)=\S_{\alpha,\beta,\gamma}\v_\beta(x-\gamma),
\end{equation}
where the notation $\v_\alpha(x)$ used in this relation denotes a column vector whose entries are the generalized factorials with real increment $\alpha$, given by
\begin{equation}
	\v_\alpha(x)=(1,x,(x|\alpha)_2,\ldots,(x|\alpha)_n,\ldots)^T.
\end{equation}
Notice that, in the matrix decomposition in \eqref{Pan1}, the $(n,k)^{\rm th}$ entries of the three matrices $\S_{\alpha,0,0}$, $\S_{0,\beta,0}$, and $\S_{0,0,\gamma}$ are $\alpha^{n-k}s(n,k)$, $\beta^{n-k}S(n,k)$, and $\gamma^{n-k}\binom{n}{k}$, respectively. 

A type of generalization of classical numbers is their $q$-analogues. A $q$-analogue generalizes a known mathematical expression and recovers the corresponding classical expression in the limit as $q$ tends to 1. Examples of $q$-analogues include the $q$-integer $n$, given by
$$ [n]_q=\frac{q^n-1}{q-1},$$
the $q$-factorial of $n$, given by
$$ [n]_q!=\prod_{j=1}^n[j]q,$$
the $q$-factorial of $n$ of order $k$, given by
$$ [n]_{k,q}=\prod_{j=0}^{k-1}\frac{q^{n-j}-1}{q-1},$$
and the $q$-binomial coefficient, given by
$$ \rstf{n}{k}_q
=\prod_{j=1}^k\frac{q^{n-j+1}-1}{q^j-1}
=\frac{[n]q!}{[k]q![n-k]q!},$$
since the following limits hold:
\begin{equation*}
	\lim_{q\rightarrow 1}[n]_q=n,\qquad
	\lim_{q\rightarrow 1}[n]_q!=n!,\qquad
	\lim_{q\rightarrow 1}[n]_{k,q}=(n)_k,\qquad
	\lim_{q\rightarrow 1}\rstf{n}{k}_q=\binom{n}{k}.
\end{equation*}

For the $q$-analogues of the unified generalized Stirling numbers, Corcino et al. \cite{Corcino4} defined the following:
\begin{equation}
	\sigma^1\big[n,k\big]\equiv\sigma^1\big[n,k;\alpha,\beta,\gamma\big]_q:=S\big[n,k;q^\alpha,q^\beta,q^\gamma-1\big](q-1)^{k-n}
	\label{corq1}
\end{equation}
and
\begin{equation}
	\sigma^2\big[n,k\big]\equiv\sigma^2\big[n,k;\alpha,\beta,\gamma\big]_q:=S\big[n,k;q^\beta,q^\alpha,1-q^\gamma\big](q-1)^{k-n},
\end{equation}
with $\sigma^1\big[0,0\big]=\sigma^2\big[0,0\big]=1$. Here, $S^1\big[n,k\big]\equiv S\big[n,k;q^\alpha,q^\beta,q^\gamma-1\big]$ and $S^2\big[n,k\big]\equiv S\big[n,k;q^\beta,q^\alpha,1-q^\gamma\big]$ are called exponential-type Stirling numbers and are defined by the pair of relations \cite[equations (3) and (4)]{Corcino4}
\begin{equation}
	[t|a]_n=\sum_{k=0}^{n}S^1[n,k][t-c|b]_k \label{etS1}
\end{equation}
and
\begin{equation}
	[t|b]_n=\sum_{k=0}^{n}S^2[n,k][t+c|a]_k,\label{etS2}
\end{equation}
where $a$, $b$, and $c$ are real or complex numbers, and
\begin{equation*}
	[t|a]_n=\prod_{j=0}^{n-1}(t-a^j),\qquad
	[t|a]_0=1,\qquad
	[t|a]_1=t-1.
\end{equation*}
For the sake of clarity, we refer to the numbers $\sigma^1\big[n,k\big]$ and $\sigma^2\big[n,k\big]$ as the unified generalized $q$-Stirling numbers. Many of the combinatorial properties of $\sigma^1\big[n,k\big]$ have already been developed by Corcino et al. \cite{Corcino3,Corcino4}, including the triangular recurrence relation
\begin{equation}\label{recqgenstir}
	\sigma^1\big[n+1,k\big]=\sigma^1\big[n,k-1\big]+([k\beta]_q-[n\alpha]_q-[\gamma]_q)\sigma^1\big[n,k\big],
\end{equation}
where $n\geq k\geq1$, $\sigma^1\big[n,n\big]=\sigma^2\big[0,0\big]$, and $\sigma^1\big[n,0\big]=[q^{\gamma}|q^{\alpha}]_n(q-1)^{-n}$, as well as the explicit formula
\begin{equation}
	\sigma^1\big[n,k\big]=\left(\prod_{i=0}^{k}[\beta i]_q\right)^{-1}\sum_{j=0}^{k}(-1)^{k-j}q^{\beta\left[\binom{j+1}{2}-kj\right]}\rstf{k}{j}_{q^{\beta}}\prod_{j=0}^{n-1}\left([\beta j]_q+[\gamma]_q-[\alpha i]_q\right).
\end{equation}
From these results, the following analogous properties for $S(n,k;\alpha,\beta,\gamma)$ are recovered in the limit as $q\rightarrow1$:
\begin{equation}
S(n+1,k;\alpha,\beta,\gamma)=S(n,k-1;\alpha,\beta,\gamma)+(k\beta-n\alpha+\gamma)S(n,k;\alpha,\beta,\gamma)\label{unirecur}
\end{equation}
\begin{equation}
	S(n,k;\alpha,\beta,\gamma)=\frac{1}{k!\beta^k}\sum_{j=0}^{k}(-1)^{k-j}\binom{k}{j}(\beta j+\gamma|\alpha)_n.
\end{equation}

In this paper, we define the unified generalized $q$-Stirling matrices and establish their matrix decompositions using a method similar to that employed by Pan \cite{Pan}.

\section{Unified Generalized $q$-Stirling Matrices}

By the definition of the factorial $\left[t|a\right]_q$, we have
\begin{align*}
	(q-1)^{-n}[q^t|q^{\alpha}]_q&=\prod_{j=0}^{n-1}\left(\frac{q^t-q^{\alpha j}}{q-1}\right)\\
	&=\prod_{j=0}^{n-1}\left(\frac{q^t-1}{q-1}-\frac{q^{\alpha j}-1}{q-1}\right)\\
	&=\prod_{j=0}^{n-1}\left([t]_q-[\alpha j]_q\right).
\end{align*}
Henceforth, we use $\left\langle \big[t\big]_q|\big[\alpha\big]_q\right\rangle_n$ to denote the expression on the right-hand side. That is,
\begin{equation}
	\left\langle \big[t\big]_q|\big[\alpha\big]_q\right\rangle_n=\prod_{j=0}^{n-1}\left([t]_q-[\alpha j]_q\right).
\end{equation}

Now, by letting $a=q^{\alpha}$, $b=q^{\beta}$, and $c=q^{\gamma}-1$, and multiplying both sides of \eqref{etS1} by $(q-1)^{-n}$, with $t$ replaced by $q^{t}$, we obtain
\begin{align*}
	\left[q^t|q^{\alpha}\right]_n&=\sum_{k=0}^{n}S[n,k;q^{\alpha},q^{\beta},q^{\gamma}-1]\left[q^t-q^{\gamma}+1|q^{\beta}\right]_k\\
	(q-1)^{-n}\left[q^t|q^{\alpha}\right]_n&=\sum_{k=0}^{n}S[n,k;q^{\alpha},q^{\beta},q^{\gamma}-1](q-1)^{k-n}\frac{\left[q^t-q^{\gamma}+1|q^{\beta}\right]_k}{(q-1)^k}.
\end{align*}
Since
\begin{equation}
	\frac{\left[q^t-q^{\gamma}+1|q^{\beta}\right]_k}{(q-1)^k}=\left\langle\big[t\big]_q-\big[\gamma\big]_q|\big[\beta\big]_q\right\rangle_k,
\end{equation}
it follows from \eqref{corq1} that
\begin{equation*}
	\left\langle\big[t\big]_q|\big[\alpha\big]_q\right\rangle_n=\sum_{k=0}^{n}\sigma^1\big[n,k\big]\left\langle\big[t\big]_q-\big[\gamma\big]_q|\big[\beta\big]_q\right\rangle_k.
\end{equation*}
Similarly, we may derive
\begin{equation*}
	\left\langle\big[t\big]_q|\big[\beta\big]_q\right\rangle_n=\sum_{k=0}^{n}\sigma^2\big[n,k\big]\left\langle\big[t\big]_q+\big[\gamma\big]_q|\big[\alpha\big]_q\right\rangle_k
\end{equation*}
from \eqref{etS2}. These two identities provide horizontal generating functions for the unified generalized $q$-Stirling numbers, which are essential for establishing the main results of this study. They also appear to be $q$-analogues of the defining relations in \eqref{Hsu1} and \eqref{Hsu2}. Hence, we formally state these results in the following theorem.

\begin{thm} \label{thm1}
	The unified generalized $q$-Stirling numbers $\sigma^1[n,k]$ and $\sigma^2[n,k]$ have the following horizontal generating functions:
	\begin{equation}
		\left\langle\big[t\big]_q|\big[\alpha\big]_q\right\rangle_n=\sum_{k=0}^{n}\sigma^1\big[n,k\big]\left\langle\big[t\big]_q-\big[\gamma\big]_q|\big[\beta\big]_q\right\rangle_k \label{result1}
	\end{equation} 
	and
	\begin{equation}
		\left\langle\big[t\big]_q|\big[\beta\big]_q\right\rangle_n=\sum_{k=0}^{n}\sigma^2\big[n,k\big]\left\langle\big[t\big]_q+\big[\gamma\big]_q|\big[\alpha\big]_q\right\rangle_k.\label{result2}
	\end{equation} 
\end{thm}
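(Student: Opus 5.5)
The plan is to derive both identities directly from the defining relations \eqref{etS1} and \eqref{etS2} of the exponential-type Stirling numbers. The choices are $a=q^{\alpha}$, $b=q^{\beta}$, $c=q^{\gamma}-1$ (the \emph{same} $c$ in both relations), and the substitution $t\mapsto q^{t}$. This is the pairing under which $S^1[n,k]=S[n,k;q^\alpha,q^\beta,q^\gamma-1]$ and $S^2[n,k]=S[n,k;q^\beta,q^\alpha,1-q^\gamma]$ are exactly the coefficients in \eqref{etS1} and \eqref{etS2}. The one elementary tool needed is the identity
\begin{equation*}
\frac{q^{x}-q^{y}}{q-1}=[x]_q-[y]_q ,
\end{equation*}
together with the observation that a constant $\pm(q^{\gamma}-1)$ inside a factor contributes $\pm[\gamma]_q$ after division by $q-1$.

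For \eqref{result1}, I would write \eqref{etS1} with $t$ replaced by $q^t$ and multiply by $(q-1)^{-n}$. Splitting this factor as $(q-1)^{k-n}(q-1)^{-k}$ inside the sum gives, by \eqref{corq1}, $\sigma^1[n,k]$ times $(q-1)^{-k}[q^t-q^\gamma+1|q^\beta]_k$. The left side is $\langle [t]_q|[\alpha]_q\rangle_n$ by the computation preceding the theorem. For the right side, I would expand factor by factor:
\begin{equation*}
\frac{q^t-q^{\gamma}+1-q^{\beta j}}{q-1}=\frac{(q^t-1)-(q^\gamma-1)-(q^{\beta j}-1)}{q-1}=[t]_q-[\gamma]_q-[\beta j]_q .
\end{equation*}
The product over $j=0,\dots,k-1$ is then $\langle [t]_q-[\gamma]_q|[\beta]_q\rangle_k$, which proves \eqref{result1}.

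For \eqref{result2}, I would proceed identically from \eqref{etS2} with the same $c=q^\gamma-1$: replace $t$ by $q^t$, multiply by $(q-1)^{-n}$, and redistribute the power of $q-1$ to obtain $\sigma^2[n,k]=S^2[n,k](q-1)^{k-n}$. The left side becomes $\langle [t]_q|[\beta]_q\rangle_n$ by the same product computation with $\alpha$ replaced by $\beta$. Each factor on the right is
\begin{equation*}
\frac{q^t+q^\gamma-1-q^{\alpha j}}{q-1}=\frac{(q^t-1)+(q^\gamma-1)-(q^{\alpha j}-1)}{q-1}=[t]_q+[\gamma]_q-[\alpha j]_q .
\end{equation*}
The product is therefore $\langle [t]_q+[\gamma]_q|[\alpha]_q\rangle_k$, with the plus sign exactly as stated in \eqref{result2}.

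The only real obstacle is bookkeeping. One must confirm that the parameter triple $(q^\beta,q^\alpha,1-q^\gamma)$ used to define $\sigma^2$ corresponds, in the convention of \eqref{etS2}, to the shift $t+c$ with $c=q^\gamma-1$ rather than $1-q^\gamma$. This mirrors the Hsu--Shiue convention $S^2=S(n,k;\beta,\alpha,-\gamma)$ appearing in \eqref{Hsu2}, where the sign of $\gamma$ is flipped in the name while $+\gamma$ appears in the factorial. Once that is checked, the $j=0$ factors (for instance $[t|a]_1=t-1$, giving $[t]_q$) and the convention $\sigma^i[0,0]=1$ are consistent with the general formula, so no separate cases are needed.
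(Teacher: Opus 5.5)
Your proposal is correct and matches the paper's own derivation: substitute $a=q^{\alpha}$, $b=q^{\beta}$, $c=q^{\gamma}-1$ and $t\mapsto q^{t}$ into \eqref{etS1} and \eqref{etS2}, multiply by $(q-1)^{-n}$, and absorb $(q-1)^{k-n}$ into the coefficients via \eqref{corq1}. Your factor-by-factor check of the second identity also works. It uses the same $c=q^{\gamma}-1$, consistent with the convention $S^2[n,k]=S[n,k;q^\beta,q^\alpha,1-q^\gamma]$, and it fills in the step the paper covers only with ``similarly.''
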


The horizontal generating functions in Theorem~\ref{thm1} naturally lead to a matrix representation of the unified generalized $q$-Stirling numbers. By arranging the numbers according to their indices, we obtain two infinite-dimensional lower triangular matrices, which we define as follows.

\begin{defn}\label{defnmajor}\rm 
	The unified generalized $q$-Stirling matrices with real parameters $\alpha$, $\beta$, and $\gamma$, denoted by $\sigma^1_{\alpha,\beta,\gamma}$ and $\sigma^2_{\alpha,\beta,\gamma}$, are defined to be infinite-dimensional lower triangular matrices, the $(n,k)^{\rm th}$ entries of which are $\sigma^1\big[n,k\big]$ and $\sigma^2\big[n,k\big]$, respectively.
\end{defn}

The two unified generalized $q$-Stirling matrices can be explicitly represented as
\begin{equation}
	\sigma^1_{\alpha,\beta,\gamma}=
	\begin{pmatrix}
		\sigma^1[0,0] & 0 & 0 & 0 & \cdots \\
		\sigma^1[1,0] & \sigma^1[1,1] & 0 & 0 & \cdots \\
		\sigma^1[2,0] & \sigma^1[2,1] & \sigma^1[2,2] & 0 & \cdots \\
		\sigma^1[3,0] & \sigma^1[3,1] & \sigma^1[3,2] & \sigma^1[3,3] & \cdots \\
		\vdots & \vdots & \vdots & \vdots & \ddots
	\end{pmatrix}
\end{equation}
and
\begin{equation}
	\sigma^2_{\alpha,\beta,\gamma}=
	\begin{pmatrix}
		\sigma^2[0,0] & 0 & 0 & 0 & \cdots \\
		\sigma^2[1,0] & \sigma^2[1,1] & 0 & 0 & \cdots \\
		\sigma^2[2,0] & \sigma^2[2,1] & \sigma^2[2,2] & 0 & \cdots \\
		\sigma^2[3,0] & \sigma^2[3,1] & \sigma^2[3,2] & \sigma^2[3,3] & \cdots \\
		\vdots & \vdots & \vdots & \vdots & \ddots
	\end{pmatrix}.
\end{equation}
Thus, the $(n,k)^{\rm th}$ entries of $\sigma^1_{\alpha,\beta,\gamma}$ and $\sigma^2_{\alpha,\beta,\gamma}$ are given by $\sigma^1[n,k]$ and $\sigma^2[n,k]$, respectively, for $n,k\geq 0$, with both matrices being lower triangular since $\sigma^1[n,k]=\sigma^2[n,k]=0$ whenever $k>n$.

The matrix representations introduced above provide a natural way to express the defining relations of the unified generalized $q$-Stirling numbers. In particular, the matrices $\sigma^1_{\alpha,\beta,\gamma}$ and $\sigma^2_{\alpha,\beta,\gamma}$ serve as transformation matrices between the generalized $q$-factorial sequences with parameters $\alpha$ and $\beta$. This leads to the following matrix relations.

\begin{thm}\label{thm2}
	The unified generalized $q$-Stirling matrices satisfy the following matrix relations:
	\begin{equation}
		\v_{\alpha}\left(\big[t\big]_q\right)=\sigma^1_{\alpha,\beta,\gamma}\v_{\beta}\left(\big[t\big]_q-\big[\gamma\big]_q\right)\label{result3}
	\end{equation} and
	\begin{equation}
		\v_{\beta}\left(\big[t\big]_q\right)=\sigma^2_{\alpha,\beta,\gamma}\v_{\alpha}\left(\big[t\big]_q+\big[\gamma\big]_q\right),\label{result4}
	\end{equation}
	where
	\begin{equation}
		\v_{\alpha}\left(\big[t\big]_q\right)=\left( 1,\big[t\big]_q,\left\langle\big[t\big]_q|\big[\alpha\big]_q\right\rangle_2,\left\langle\big[t\big]_q|\big[\alpha\big]_q\right\rangle_3,\ldots,\left\langle\big[t\big]_q|\big[\alpha\big]_q\right\rangle_n,\ldots\right)^T
	\end{equation}
\end{thm}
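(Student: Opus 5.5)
The plan is to read Theorem~\ref{thm2} as nothing more than the componentwise restatement of Theorem~\ref{thm1}, and to verify it one row at a time. Fix $n\geq 0$ and compare the $n$th entries (indexing from $0$) of the two sides of \eqref{result3}.

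\textbf{Left-hand side.} By the definition of $\v_\alpha$, the $n$th entry of $\v_{\alpha}\left(\big[t\big]_q\right)$ is $\left\langle\big[t\big]_q|\big[\alpha\big]_q\right\rangle_n$. I would first check that the listed initial entries agree with the product formula:
\begin{equation*}
\left\langle\big[t\big]_q|\big[\alpha\big]_q\right\rangle_0=1,\qquad
\left\langle\big[t\big]_q|\big[\alpha\big]_q\right\rangle_1=[t]_q-[0]_q=[t]_q.
\end{equation*}
The second identity uses $[0]_q=0$. Hence the vector really is $\bigl(\left\langle\big[t\big]_q|\big[\alpha\big]_q\right\rangle_n\bigr)_{n\geq0}$. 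For a general argument $x$, the $k$th entry of $\v_\beta(x)$ is $\prod_{j=0}^{k-1}(x-[\beta j]_q)$; with $x=[t]_q-[\gamma]_q$ this is exactly $\left\langle\big[t\big]_q-\big[\gamma\big]_q|\big[\beta\big]_q\right\rangle_k$.

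\textbf{Right-hand side.} The $n$th entry of $\sigma^1_{\alpha,\beta,\gamma}\v_{\beta}\left(\big[t\big]_q-\big[\gamma\big]_q\right)$ is
\begin{equation*}
\sum_{k\geq0}\sigma^1[n,k]\left\langle\big[t\big]_q-\big[\gamma\big]_q|\big[\beta\big]_q\right\rangle_k .
\end{equation*}
Since the matrix is lower triangular, $\sigma^1[n,k]=0$ for $k>n$. The sum is therefore finite, running over $0\leq k\leq n$, so the infinite matrix–vector product is well defined with no convergence issues. By \eqref{result1}, this finite sum equals $\left\langle\big[t\big]_q|\big[\alpha\big]_q\right\rangle_n$. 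Since $n$ was arbitrary, the two vectors agree entrywise, which proves \eqref{result3}.

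\textbf{The second relation.} Identity \eqref{result4} follows by the same argument with the roles of the parameters swapped. The $n$th entry of the left side is $\left\langle\big[t\big]_q|\big[\beta\big]_q\right\rangle_n$. The $k$th entry of $\v_\alpha\left([t]_q+[\gamma]_q\right)$ is $\left\langle\big[t\big]_q+\big[\gamma\big]_q|\big[\alpha\big]_q\right\rangle_k$. Lower triangularity of $\sigma^2_{\alpha,\beta,\gamma}$ again truncates the row sum at $k=n$, and \eqref{result2} then closes the argument.

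There is no real obstacle here. The only point needing care is notational: $\v_\beta$ must be read as the vector of $q$-factorials $\prod_{j}(x-[\beta j]_q)$ evaluated at a shifted argument $x$, not at $[t]_q$ alone. Once that is fixed, the proof is pure bookkeeping.
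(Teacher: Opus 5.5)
Your proposal is correct and follows the paper's proof: compare $n$th entries, invoke \eqref{result1} (resp.\ \eqref{result2}) from Theorem~\ref{thm1}, and use lower triangularity to identify the finite sum with the $n$th row of the matrix--vector product. Your extra checks are harmless refinements that the paper leaves implicit: that the first two listed entries of $\v_\alpha$ match the product formula (via $[0]_q=0$), and that every product involves only finite sums.
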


\begin{proof}
	For $n$ a nonnegative integer, let
	$\left\langle \big[t\big]_q|\big[\alpha\big]_q\right\rangle_n$
	denote the $n^{\rm th}$ entry of the column vector
	$\v_{\alpha}\left(\big[t\big]_q\right)$.
	By Equation \eqref{result1} in Theorem \ref{thm1},
	\begin{equation*}
		\left\langle \big[t\big]_q|\big[\alpha\big]_q\right\rangle_n=\sum_{k=0}^{n}\sigma^1\big[n,k\big]\left\langle\big[t\big]_q-\big[\gamma\big]_q\big|\big[\beta\big]_q\right\rangle_k.
	\end{equation*}
	Since $\sigma^1\big[n,k\big]=0$ for $k>n$, the right-hand side is precisely the inner product of the $n^{\rm th}$ row of the matrix $\sigma^1_{\alpha,\beta,\gamma}$ with the column vector
	$\v_{\beta}\left(\big[t\big]_q-\big[\gamma\big]_q\right)$.
	Therefore,
	\begin{equation*}
		\left\langle \big[t\big]_q|\big[\alpha\big]_q\right\rangle_n=\left[\sigma^1_{\alpha,\beta,\gamma}\v_{\beta}\left(\big[t\big]_q-\big[\gamma\big]_q\right)\right]_n,
	\end{equation*}
	where the subscript $n$ denotes the $n^{\rm th}$ entry of the resulting column vector.
	Since this holds for every $n\geq0$, it follows that
	\begin{equation}
		\v_{\alpha}\left(\big[t\big]_q\right)=\sigma^1_{\alpha,\beta,\gamma}\v_{\beta}\left(\big[t\big]_q-\big[\gamma\big]_q\right).
	\end{equation}
	
	Similarly, by Equation \eqref{result2} in Theorem \ref{thm1},
	\begin{equation*}
		\left\langle \big[t\big]_q|\big[\beta\big]_q\right\rangle_n=\sum_{k=0}^{n}
		\sigma^2\big[n,k\big]\left\langle\big[t\big]_q+\big[\gamma\big]_q\big|\big[\alpha\big]_q\right\rangle_k.
	\end{equation*}
	Since $\sigma^2\big[n,k\big]=0$ for $k>n$, the right-hand side is precisely the inner product of the $n^{\rm th}$ row of the matrix $\sigma^2_{\alpha,\beta,\gamma}$ with the column vector
	$\v_{\alpha}\left(\big[t\big]_q+\big[\gamma\big]_q\right)$.
	Hence,
	\begin{equation*}
		\left\langle \big[t\big]_q|\big[\beta\big]_q\right\rangle_n=\left[\sigma^2_{\alpha,\beta,\gamma}\v_{\alpha}\left(\big[t\big]_q+\big[\gamma\big]_q\right)\right]_n.
	\end{equation*}
	Again, since this holds for every $n\geq0$, we obtain
	\begin{equation}
		\v_{\beta}\left(\big[t\big]_q\right)=\sigma^2_{\alpha,\beta,\gamma}\v_{\alpha}\left(\big[t\big]_q+\big[\gamma\big]_q\right).
	\end{equation}
	This completes the proof.
\end{proof}

In 2016, Mangontarum et al. \cite{Mangontarum4} introduced a pair of $q$-analogues of the translated Whitney numbers defined by Belbachir and Bousbaa \cite{Bel}. Using the product
\begin{equation}
	\prod_{i=0}^{n-1}\big[x-i\alpha\big]_q=\big[x\big]_q\big[x-\alpha\big]_q\big[x-2\alpha\big]_q\cdots\big[x-(n-1)\alpha\big]_q,
\end{equation}
the translated $q$-Whitney numbers of the first and second kinds, denoted by
$w^1_{(\alpha)}\big[n,k\big]_q$ and
$w^2_{(\alpha)}\big[n,k\big]_q$, respectively, are defined by the following horizontal generating functions:
\begin{equation}
	\prod_{i=0}^{n-1}\big[x-i\alpha\big]_q=\sum_{k=0}^nw^1_{(\alpha)}\big[n,k\big]_q\big[x\big]_q^k,\label{transqfirst}
\end{equation}
\begin{equation}
	\big[x\big]_q^n=\sum_{k=0}^nw^2_{(\alpha)}\big[n,k\big]_q\prod_{i=0}^{k-1}\big[x-i\alpha\big]_q.\label{transqsecond}
\end{equation}
Several combinatorial properties of the translated $q$-Whitney numbers, which are $q$-analogues of corresponding properties of the classical translated Whitney numbers introduced by Belbachir and Bousbaa \cite{Bel} and presented by Mangontarum and Dibagulun \cite{Mangontarum3}, have already been established in the paper of Mangontarum et al. \cite{Mangontarum4}.

In the following corollaries, we use the translated $q$-Whitney numbers to describe the entries of the matrices
$\sigma^1_{\alpha,\beta,\gamma}$ and
$\sigma^2_{\alpha,\beta,\gamma}$
for particular values of $\alpha$, $\beta$, and $\gamma$.

\begin{cor}\label{thm3} 
	The $(n,k)^{\rm th}$ entry of the matrices $\sigma^1_{\alpha,0,0}$ and $\sigma^2_{\alpha,0,0}$ are given by
	\begin{equation}
	\sigma^1\big[n,k;\alpha,0,0\big]_q=q^{\alpha\binom{n}{2}}w_{(\alpha)}^{1}\big[n,k\big]_q\label{result5}
	\end{equation}
	and
	\begin{equation}
	\sigma^2\big[n,k;\alpha,0,0\big]_q=q^{-\alpha\binom{k}{2}}w_{(\alpha)}^{2}\big[n,k\big]_q,\label{result6}
	\end{equation}
	respectively.
\end{cor}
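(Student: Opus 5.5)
The plan is to specialize Theorem~\ref{thm1} to $\beta=\gamma=0$ and then compare coefficients with the defining relations \eqref{transqfirst} and \eqref{transqsecond} of the translated $q$-Whitney numbers.

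\emph{Specializing the generalized $q$-factorials.} Since $[0]_q=0$, we have
$$\left\langle[t]_q-[0]_q\,|\,[0]_q\right\rangle_k=\prod_{j=0}^{k-1}[t]_q=[t]_q^k .$$
The key identity relating $\left\langle[t]_q|[\alpha]_q\right\rangle_n$ to the product $\prod_{i=0}^{n-1}[t-i\alpha]_q$ is
$$[t]_q-[\alpha j]_q=\frac{q^t-q^{\alpha j}}{q-1}=q^{\alpha j}\,\frac{q^{t-\alpha j}-1}{q-1}=q^{\alpha j}[t-\alpha j]_q .$$
Taking the product over $j=0,\dots,n-1$ and using $\sum_{j=0}^{n-1}\alpha j=\alpha\binom{n}{2}$ gives
$$\left\langle[t]_q|[\alpha]_q\right\rangle_n=q^{\alpha\binom{n}{2}}\prod_{j=0}^{n-1}[t-j\alpha]_q .$$

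\emph{First identity.} Substituting $\beta=\gamma=0$ into \eqref{result1} gives
$$q^{\alpha\binom{n}{2}}\prod_{j=0}^{n-1}[t-j\alpha]_q=\sum_{k=0}^n\sigma^1[n,k;\alpha,0,0]_q\,[t]_q^k .$$
Multiplying \eqref{transqfirst} (with $x=t$) by $q^{\alpha\binom{n}{2}}$ produces a second expansion of the same left-hand side in powers of $[t]_q$. Comparing the coefficients of $[t]_q^k$ then yields \eqref{result5}.

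\emph{Second identity.} Substituting $\beta=\gamma=0$ into \eqref{result2} gives
$$[t]_q^n=\sum_{k=0}^n\sigma^2[n,k;\alpha,0,0]_q\,q^{\alpha\binom{k}{2}}\prod_{i=0}^{k-1}[t-i\alpha]_q .$$
Comparing coefficients with \eqref{transqsecond} yields $q^{\alpha\binom{k}{2}}\sigma^2[n,k;\alpha,0,0]_q=w^2_{(\alpha)}[n,k]_q$, which is \eqref{result6}.

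\emph{Main obstacle: justifying the coefficient comparison.} I expect this to be the only delicate step, and I would handle it as follows. Set $x=[t]_q$. As $t$ ranges over the reals (for fixed $q\neq1$, $q>0$), $x$ takes infinitely many values. Each side of each identity is a polynomial in $x$, because
$$\prod_{i=0}^{k-1}[t-i\alpha]_q=q^{-\alpha\binom{k}{2}}\prod_{i=0}^{k-1}\bigl(x-[\alpha i]_q\bigr)$$
by the key identity above. So each identity is an equality of polynomials in $x$. In the first identity we compare coefficients in the monomial basis $\{x^k\}$. In the second we use that the polynomials $\prod_{i<k}(x-[\alpha i]_q)$ have degree exactly $k$ with leading coefficient $1$, so they form a basis. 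Uniqueness of coefficients in either basis then justifies both comparisons.
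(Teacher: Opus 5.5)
Your proposal is correct and follows the paper's proof essentially step for step. You specialize Theorem~\ref{thm1} to $\beta=\gamma=0$, use $[t]_q-[\alpha j]_q=q^{\alpha j}[t-\alpha j]_q$ to rewrite $\left\langle[t]_q|[\alpha]_q\right\rangle_n$ as $q^{\alpha\binom{n}{2}}\prod_{j<n}[t-j\alpha]_q$, and compare coefficients with \eqref{transqfirst} and \eqref{transqsecond}. Your extra paragraph justifies the coefficient comparison: both sides are polynomials in $x=[t]_q$, which takes infinitely many values, expanded in a basis of polynomials of exact degree $k$. This makes explicit the uniqueness step that the paper only asserts.
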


\begin{proof}
	To prove Equation \eqref{result5}, first observe that
	\begin{equation*}
		\big[t\big]_q-\big[\alpha j\big]_q=\frac{q^t-q^{\alpha j}}{q-1}=q^{\alpha j}\frac{q^{t-\alpha j}-1}{q-1}=q^{\alpha j}\big[t-\alpha j\big]_q.
	\end{equation*}
	Hence,
	\begin{align*}
		\left\langle \big[t\big]_q|\big[\alpha\big]_q\right\rangle_n
		&=\prod_{j=0}^{n-1}\left(\big[t\big]_q-\big[\alpha j\big]_q\right)\\
		&=\prod_{j=0}^{n-1}q^{\alpha j}\big[t-\alpha j\big]_q\\
		&=q^{\alpha\sum_{j=0}^{n-1}j}\prod_{j=0}^{n-1}\big[t-\alpha j\big]_q\\
		&=q^{\alpha\binom{n}{2}}\prod_{j=0}^{n-1}\big[t-\alpha j\big]_q.
	\end{align*}
	Now, setting $\beta=\gamma=0$ in Equation \eqref{result1} gives
	\begin{equation*}
		\left\langle \big[t\big]_q|\big[\alpha\big]_q\right\rangle_n=\sum_{k=0}^{n}\sigma^1\big[n,k;\alpha,0,0\big]_q\left\langle
		\big[t\big]_q-\big[0\big]_q\big|\big[0\big]_q\right\rangle_k.
	\end{equation*}
	Since $\big[0\big]_q=0$ and
	$\left\langle [t]_q|[0]_q\right\rangle_k=[t]_q^k$, it follows that
	\begin{equation*}
		\left\langle\big[t\big]_q|\big[\alpha\big]_q\right\rangle_n=\sum_{k=0}^{n}\sigma^1\big[n,k;\alpha,0,0\big]_q\big[t\big]_q^k.
	\end{equation*}
	Therefore,
	\begin{equation*}
		q^{\alpha\binom{n}{2}}\prod_{j=0}^{n-1}\big[t-\alpha j\big]_q=\sum_{k=0}^{n}\sigma^1\big[n,k;\alpha,0,0\big]_q
		\big[t\big]_q^k.
	\end{equation*}
	Using the defining relation for the translated $q$-Whitney numbers of the first kind in Equation \eqref{transqfirst}, we obtain
	\begin{equation*}
		\sum_{k=0}^{n}\sigma^1\big[n,k;\alpha,0,0\big]_q\big[t\big]_q^k=q^{\alpha\binom{n}{2}}\sum_{k=0}^{n}w^1_{(\alpha)}\big[n,k\big]_q\big[t\big]_q^k.
	\end{equation*}
	Comparing the coefficients of $\big[t\big]_q^k$ on both sides yields
	\begin{equation*}
		\sigma^1\big[n,k;\alpha,0,0\big]_q=q^{\alpha\binom{n}{2}}w^1_{(\alpha)}\big[n,k\big]_q,
	\end{equation*}
	which proves Equation \eqref{result5}.
	
	To prove Equation \eqref{result6}, setting $\beta=\gamma=0$ in Equation \eqref{result2} gives
	\begin{equation*}
		\left\langle\big[t\big]_q|\big[0\big]_q\right\rangle_n=\sum_{k=0}^{n}\sigma^2\big[n,k;\alpha,0,0\big]_q\left\langle\big[t\big]_q+\big[0\big]_q\big|\big[\alpha\big]_q\right\rangle_k.
	\end{equation*}
	Since
	$\left\langle [t]_q|[0]_q\right\rangle_n=[t]_q^n$
	and
	$\big[0\big]_q=0$, we have
	\begin{equation*}
		\big[t\big]_q^n=\sum_{k=0}^{n}\sigma^2\big[n,k;\alpha,0,0\big]_q\prod_{i=0}^{k-1}\left(\big[t\big]_q-\big[\alpha i\big]_q\right).
	\end{equation*}
	Using
	$$ \big[t\big]_q-\big[\alpha i\big]_q=q^{\alpha i}\big[t-\alpha i\big]_q,$$
	we obtain
	\begin{equation*}
		\big[t\big]_q^n=\sum_{k=0}^{n}q^{\alpha\binom{k}{2}}\sigma^2\big[n,k;\alpha,0,0\big]_q\prod_{i=0}^{k-1}\big[t-\alpha i\big]_q.
	\end{equation*}
	Comparing this with the defining relation for the translated $q$-Whitney numbers of the second kind in Equation \eqref{transqsecond}, we obtain
	\begin{equation*}
		q^{\alpha\binom{k}{2}}\sigma^2\big[n,k;\alpha,0,0\big]_q=w^2_{(\alpha)}\big[n,k\big]_q,
	\end{equation*}
	which proves Equation \eqref{result6}.
\end{proof}

\begin{cor}\label{thm4}
	The $(n,k)^{\rm th}$ entry of the matrices $\sigma^1_{0,\beta,0}$ and $\sigma^2_{0,\beta,0}$ are given by
	\begin{equation}
	\sigma^1\big[n,k;0,\beta,0\big]_q=q^{-\beta\binom{k}{2}}w_{(\beta)}^{2}\big[n,k\big]_q\label{result7}
	\end{equation}
	and
	\begin{equation}
	\sigma^2\big[n,k;0,\beta,0\big]_q=q^{\beta\binom{n}{2}}w_{(\beta)}^{1}\big[n,k\big]_q,\label{result8}
	\end{equation}
	respectively.
\end{cor}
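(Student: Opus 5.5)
The argument mirrors the proof of Corollary~\ref{thm3}, with the roles of the two horizontal generating functions in Theorem~\ref{thm1} interchanged. The only extra ingredient is the factorization already used there:
\begin{equation*}
\big[t\big]_q-\big[\beta j\big]_q=q^{\beta j}\big[t-\beta j\big]_q ,
\end{equation*}
which gives
\begin{equation*}
\left\langle\big[t\big]_q|\big[\beta\big]_q\right\rangle_n=q^{\beta\binom{n}{2}}\prod_{j=0}^{n-1}\big[t-\beta j\big]_q .
\end{equation*}

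For \eqref{result7}, set $\alpha=\gamma=0$ in \eqref{result1}. Since $\big[0\big]_q=0$, the left-hand side becomes $\left\langle [t]_q|[0]_q\right\rangle_n=[t]_q^n$. The right-hand side becomes
\begin{equation*}
\sum_{k=0}^n\sigma^1\big[n,k;0,\beta,0\big]_q\left\langle[t]_q|[\beta]_q\right\rangle_k
=\sum_{k=0}^n q^{\beta\binom{k}{2}}\sigma^1\big[n,k;0,\beta,0\big]_q\prod_{i=0}^{k-1}\big[t-\beta i\big]_q .
\end{equation*}
Comparing with the defining relation \eqref{transqsecond}, taken with $\alpha$ replaced by $\beta$, gives
\begin{equation*}
q^{\beta\binom{k}{2}}\sigma^1\big[n,k;0,\beta,0\big]_q=w^2_{(\beta)}\big[n,k\big]_q ,
\end{equation*}
which is \eqref{result7}.

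For \eqref{result8}, set $\alpha=\gamma=0$ in \eqref{result2}. The left-hand side is $\left\langle[t]_q|[\beta]_q\right\rangle_n=q^{\beta\binom{n}{2}}\prod_{j=0}^{n-1}[t-\beta j]_q$. On the right, $\left\langle [t]_q+[0]_q|[0]_q\right\rangle_k=[t]_q^k$. Expanding the product via \eqref{transqfirst} and matching coefficients of $[t]_q^k$ yields
\begin{equation*}
\sigma^2\big[n,k;0,\beta,0\big]_q=q^{\beta\binom{n}{2}}w^1_{(\beta)}\big[n,k\big]_q .
\end{equation*}

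The one step that needs justification is the comparison of coefficients. I would justify it by uniqueness of expansion in a basis. The powers $[t]_q^k$ are linearly independent as functions of $t$, since $[t]_q$ takes infinitely many values. The products $\prod_{i<k}[t-\beta i]_q$ are, up to the nonzero factors $q^{-\beta\binom{k}{2}}$, monic polynomials of degree $k$ in $[t]_q$, so they form a triangular basis. Both expansions are therefore unique. One could also note that these identities correspond to Corollary~\ref{thm3} under the symmetry $\sigma^1[\,\cdot\,;\alpha,\beta,\gamma]\leftrightarrow\sigma^2[\,\cdot\,;\beta,\alpha,-\gamma]$, but the direct computation is cleaner.
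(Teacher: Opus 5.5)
Your proof is correct and follows the paper's argument essentially step for step. You specialize \eqref{result1} and \eqref{result2} at $\alpha=\gamma=0$, use $\big[t\big]_q-\big[\beta j\big]_q=q^{\beta j}\big[t-\beta j\big]_q$, and match against \eqref{transqsecond} and \eqref{transqfirst}. Your triangular-basis justification of the coefficient comparison is a welcome detail that the paper leaves implicit.

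One small caveat on your closing aside: the symmetry $\sigma^1[\,\cdot\,;\alpha,\beta,\gamma]\leftrightarrow\sigma^2[\,\cdot\,;\beta,\alpha,-\gamma]$ is valid here only because $\gamma=0$, since in general $1-q^{-\gamma}\neq q^{\gamma}-1$. You do not rely on it, so the proof is unaffected.
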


\begin{proof}
	To prove Equation \eqref{result7}, set $\alpha=\gamma=0$ in Equation \eqref{result1}. We then obtain
	\begin{equation*}
		\left\langle \big[t\big]_q|\big[0\big]_q\right\rangle_n
		=
		\sum_{k=0}^{n}
		\sigma^1\big[n,k;0,\beta,0\big]_q
		\left\langle
		\big[t\big]_q-\big[0\big]_q
		\big|
		\big[\beta\big]_q
		\right\rangle_k.
	\end{equation*}
	Since $\big[0\big]_q=0$ and
	$\left\langle [t]_q|[0]_q\right\rangle_n=[t]_q^n$, this becomes
	\begin{equation*}
		\big[t\big]_q^n
		=
		\sum_{k=0}^{n}
		\sigma^1\big[n,k;0,\beta,0\big]_q
		\left\langle
		\big[t\big]_q
		\big|
		\big[\beta\big]_q
		\right\rangle_k.
	\end{equation*}
	Using the identity
	\begin{equation*}
		\big[t\big]_q-\big[\beta i\big]_q
		=
		q^{\beta i}\big[t-\beta i\big]_q,
	\end{equation*}
	we have
	\begin{align*}
		\left\langle
		\big[t\big]_q
		\big|
		\big[\beta\big]_q
		\right\rangle_k
		&=
		\prod_{i=0}^{k-1}
		\left(
		\big[t\big]_q-\big[\beta i\big]_q
		\right)\\
		&=
		q^{\beta\binom{k}{2}}
		\prod_{i=0}^{k-1}
		\big[t-\beta i\big]_q.
	\end{align*}
	Consequently,
	\begin{equation*}
		\big[t\big]_q^n
		=
		\sum_{k=0}^{n}
		q^{\beta\binom{k}{2}}
		\sigma^1\big[n,k;0,\beta,0\big]_q
		\prod_{i=0}^{k-1}
		\big[t-\beta i\big]_q.
	\end{equation*}
	On the other hand, by the defining relation for the translated $q$-Whitney numbers of the second kind in Equation \eqref{transqsecond},
	\begin{equation*}
		\big[t\big]_q^n
		=
		\sum_{k=0}^{n}
		w^2_{(\beta)}\big[n,k\big]_q
		\prod_{i=0}^{k-1}
		\big[t-\beta i\big]_q.
	\end{equation*}
	Comparing the coefficients of
	$\displaystyle\prod_{i=0}^{k-1}\big[t-\beta i\big]_q$
	on both sides gives
	\begin{equation*}
		q^{\beta\binom{k}{2}}
		\sigma^1\big[n,k;0,\beta,0\big]_q
		=
		w^2_{(\beta)}\big[n,k\big]_q,
	\end{equation*}
	which proves Equation \eqref{result7}.
	
	To prove Equation \eqref{result8}, set $\alpha=\gamma=0$ in Equation
	\eqref{result2}. We then obtain
	\begin{equation*}
		\left\langle \big[t\big]_q|\big[\beta\big]_q\right\rangle_n
		=
		\sum_{k=0}^{n}
		\sigma^2\big[n,k;0,\beta,0\big]_q
		\big[t\big]_q^k.
	\end{equation*}
	Using
	\begin{equation*}
		\left\langle \big[t\big]_q|\big[\beta\big]_q\right\rangle_n
		=
		q^{\beta\binom{n}{2}}
		\prod_{i=0}^{n-1}\big[t-i\beta\big]_q
	\end{equation*}
	and Equation \eqref{transqfirst}, we obtain
	\begin{equation*}
		q^{\beta\binom{n}{2}}
		\sum_{k=0}^{n}
		w^1_{(\beta)}\big[n,k\big]_q
		\big[t\big]_q^k
		=
		\sum_{k=0}^{n}
		\sigma^2\big[n,k;0,\beta,0\big]_q
		\big[t\big]_q^k.
	\end{equation*}
	Comparing the coefficients of $\big[t\big]_q^k$ gives
	\begin{equation*}
		\sigma^2\big[n,k;0,\beta,0\big]_q
		=
		q^{\beta\binom{n}{2}}
		w^1_{(\beta)}\big[n,k\big]_q,
	\end{equation*}
	which proves Equation \eqref{result8}.
\end{proof}

When $\alpha=\beta=0$, the generalized $q$-factorial reduces to
\[
\left\langle \big[t\big]_q|\big[0\big]_q\right\rangle_n
=
\prod_{j=0}^{n-1}
\left(\big[t\big]_q-\big[0j\big]_q\right)
=
\big[t\big]_q^n.
\]
Thus, by the binomial theorem,
\begin{align*}
	\big[t\big]_q^n
	&=
	\left(
	\big[t\big]_q-\big[\gamma\big]_q+\big[\gamma\big]_q
	\right)^n\\
	&=
	\sum_{k=0}^{n}
	\binom{n}{k}
	\big[\gamma\big]_q^{n-k}
	\left\langle
	\big[t\big]_q-\big[\gamma\big]_q
	\big|
	\big[0\big]_q
	\right\rangle_k.
\end{align*}
On the other hand, setting $\alpha=\beta=0$ in Equation \eqref{result1} gives
\begin{equation*}
	\big[t\big]_q^n
	=
	\sum_{k=0}^{n}
	\sigma^1\big[n,k;0,0,\gamma\big]_q
	\left\langle
	\big[t\big]_q-\big[\gamma\big]_q
	\big|
	\big[0\big]_q
	\right\rangle_k.
\end{equation*}
Comparing the coefficients of
$\left\langle
\big[t\big]_q-\big[\gamma\big]_q
\big|
\big[0\big]_q
\right\rangle_k$
therefore yields
\begin{equation*}
	\sigma^1\big[n,k;0,0,\gamma\big]_q
	=
	\big[\gamma\big]_q^{n-k}\binom{n}{k}.
\end{equation*}

Similarly, by writing
\begin{align*}
	\big[t\big]_q^n
	&=
	\left(
	\big[t\big]_q+\big[\gamma\big]_q-\big[\gamma\big]_q
	\right)^n\\
	&=
	\sum_{k=0}^{n}
	\binom{n}{k}
	\big(-\big[\gamma\big]_q\big)^{n-k}
	\left\langle
	\big[t\big]_q+\big[\gamma\big]_q
	\big|
	\big[0\big]_q
	\right\rangle_k,
\end{align*}
and setting $\alpha=\beta=0$ in Equation \eqref{result2}, we obtain
\begin{equation*}
	\big[t\big]_q^n
	=
	\sum_{k=0}^{n}
	\sigma^2\big[n,k;0,0,\gamma\big]_q
	\left\langle
	\big[t\big]_q+\big[\gamma\big]_q
	\big|
	\big[0\big]_q
	\right\rangle_k.
\end{equation*}
Comparing the coefficients of
$\left\langle
\big[t\big]_q+\big[\gamma\big]_q
\big|
\big[0\big]_q
\right\rangle_k$
gives
\begin{equation*}
	\sigma^2\big[n,k;0,0,\gamma\big]_q
	=
	\big(-\big[\gamma\big]_q\big)^{n-k}
	\binom{n}{k}.
\end{equation*}
These observations lead to the following corollary.

\begin{cor}\label{thm5}
	The $(n,k)^{\rm th}$ entries of the matrices
	$\sigma^1_{0,0,\gamma}$ and $\sigma^2_{0,0,\gamma}$
	are given by
	\begin{equation}
		\sigma^1\big[n,k;0,0,\gamma\big]_q
		=
		\big[\gamma\big]_q^{n-k}
		\binom{n}{k},
		\label{result9}
	\end{equation}
	and
	\begin{equation}
		\sigma^2\big[n,k;0,0,\gamma\big]_q
		=
		\big(-\big[\gamma\big]_q\big)^{n-k}
		\binom{n}{k},
		\label{result10}
	\end{equation}
	respectively.
\end{cor}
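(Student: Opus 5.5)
The plan is to specialize the horizontal generating functions of Theorem~\ref{thm1} to $\alpha=\beta=0$, expand the left-hand side by the ordinary binomial theorem, and then compare coefficients. The comparison is justified by a uniqueness step, which is the only point that needs care.

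\emph{Specialization.} With $\alpha=\beta=0$, every factor $[t]_q-[0\cdot j]_q$ equals $[t]_q$, because $[0]_q=0$. Hence
\[
\left\langle [t]_q\big|[0]_q\right\rangle_n=[t]_q^n,
\qquad
\left\langle [t]_q\mp[\gamma]_q\big|[0]_q\right\rangle_k=\bigl([t]_q\mp[\gamma]_q\bigr)^k .
\]
Writing $x=[t]_q$, equation \eqref{result1} becomes
\[
x^n=\sum_{k=0}^n\sigma^1\big[n,k;0,0,\gamma\big]_q\,(x-[\gamma]_q)^k,
\]
and equation \eqref{result2} becomes
\[
x^n=\sum_{k=0}^n\sigma^2\big[n,k;0,0,\gamma\big]_q\,(x+[\gamma]_q)^k .
\]

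\emph{Binomial expansion.} Write $x=(x-[\gamma]_q)+[\gamma]_q$ and apply the binomial theorem:
\[
x^n=\sum_{k=0}^n\binom{n}{k}[\gamma]_q^{\,n-k}(x-[\gamma]_q)^k .
\]
Similarly, $x=(x+[\gamma]_q)-[\gamma]_q$ gives
\[
x^n=\sum_{k=0}^n\binom{n}{k}\bigl(-[\gamma]_q\bigr)^{n-k}(x+[\gamma]_q)^k .
\]

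\emph{Uniqueness and comparison (the main obstacle).} Coefficient comparison requires that the expansion in the basis $\{(x\mp[\gamma]_q)^k\}_{k\ge 0}$ be unique. I would argue as follows.

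\begin{itemize}
\item The polynomials $(x\mp[\gamma]_q)^k$ are monic of degree $k$ in $x$, so they form a basis of the polynomials in $x$. Expansions in this basis are therefore unique.
\item The identities in Theorem~\ref{thm1} hold for all $t$. As $t$ varies, $[t]_q=(q^t-1)/(q-1)$ takes infinitely many values for $q\neq1$, and the identities hold coefficientwise anyway, since they came from polynomial identities in $q^t$ via \eqref{etS1} and \eqref{etS2}.
\item So both sides may be treated as polynomial identities in the indeterminate $x$.
\end{itemize}

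Uniqueness then lets me read off $\sigma^1[n,k;0,0,\gamma]_q=[\gamma]_q^{n-k}\binom{n}{k}$ and $\sigma^2[n,k;0,0,\gamma]_q=(-[\gamma]_q)^{n-k}\binom{n}{k}$. These are \eqref{result9} and \eqref{result10}.

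\emph{Consistency check.} The normalization $\sigma^1[0,0]=\sigma^2[0,0]=1$ agrees with the case $n=k=0$. As $q\to1$, the formulas recover Pan's entries $\gamma^{n-k}\binom{n}{k}$.
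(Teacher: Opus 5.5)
Your proposal is correct and is essentially the paper's argument: specialize Theorem~\ref{thm1} to $\alpha=\beta=0$ so both sides become ordinary powers, expand $[t]_q^n=\bigl(([t]_q\mp[\gamma]_q)\pm[\gamma]_q\bigr)^n$ by the binomial theorem, and compare coefficients. The one addition is your explicit check that the monic polynomials $(x\mp[\gamma]_q)^k$ form a basis, which makes the coefficient comparison legitimate; the paper leaves this implicit here and only uses the corresponding basis argument later, in the proof of Theorem~\ref{thm6}.
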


The matrix relations and special cases established in this section provide the necessary foundation for deriving the matrix decompositions of the unified generalized $q$-Stirling matrices presented in the following section.

\section{Matrix Decompositions of $\sigma^1_{\alpha,\beta,\gamma}$ and $\sigma^2_{\alpha,\beta,\gamma}$}

For the decomposition that follows, we first note that the vector
\[
\v_{\alpha}\left(\big[t\big]_q\right)
=
\left(
1,\big[t\big]_q,
\left\langle \big[t\big]_q|\big[\alpha\big]_q\right\rangle_2,
\left\langle \big[t\big]_q|\big[\alpha\big]_q\right\rangle_3,
\ldots
\right)^T
\]
consists of polynomials in the variable $\big[t\big]_q$, where
\[
\left\langle \big[t\big]_q|\big[\alpha\big]_q\right\rangle_n
=
\prod_{j=0}^{n-1}
\left(\big[t\big]_q-\big[\alpha j\big]_q\right),
\qquad n\geq 0.
\]
In particular, $\left\langle \big[t\big]_q|\big[\alpha\big]_q\right\rangle_n$
is a monic polynomial of degree $n$ in the variable $\big[t\big]_q$.
Thus, the sequence
\[
\left\{
\left\langle \big[t\big]_q|\big[\alpha\big]_q\right\rangle_n
\right\}_{n\geq0}
\]
forms a basis for the polynomial space in the variable $\big[t\big]_q$.
Consequently, a lower triangular matrix representing a transformation
between such polynomial bases is uniquely determined.

The following theorem establishes matrix decompositions of the unified generalized $q$-Stirling matrices $\sigma^1_{\alpha,\beta,\gamma}$ and $\sigma^2_{\alpha,\beta,\gamma}$.

\begin{thm}\label{thm6}
	Let $\sigma^1_{\alpha,\beta,\gamma}$ and $\sigma^2_{\alpha,\beta,\gamma}$ be the unified generalized $q$-Stirling matrices with real parameters $\alpha$, $\beta$, and $\gamma$. Then
	\begin{equation}
		\sigma^1_{\alpha,\beta,\gamma}=\sigma^1_{\alpha,0,0}\sigma^1_{0,0,\gamma}\sigma^1_{0,\beta,0},\label{finalresult1}
	\end{equation}
	and
	\begin{equation}
		\sigma^2_{\alpha,\beta,\gamma}=\sigma^2_{0,\beta,0}\sigma^2_{0,0,\gamma}\sigma^2_{\alpha,0,0}.\label{finalresult2}
	\end{equation}
\end{thm}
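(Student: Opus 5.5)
The plan is to chain the matrix relations of Theorem~\ref{thm2} through intermediate bases, in the same way Pan derived \eqref{Pan1}. The chain is
\[
\v_\alpha([t]_q)\ \longrightarrow\ \v_0([t]_q)\ \longrightarrow\ \v_0([t]_q-[\gamma]_q)\ \longrightarrow\ \v_\beta([t]_q-[\gamma]_q).
\]
At the end I compare with $\v_\alpha([t]_q)=\sigma^1_{\alpha,\beta,\gamma}\v_\beta([t]_q-[\gamma]_q)$. Uniqueness of the lower triangular transition matrix between the polynomial bases in $[t]_q$, noted just before the theorem, then identifies the product with $\sigma^1_{\alpha,\beta,\gamma}$.

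\textbf{Step 1.} Apply \eqref{result3} with $\beta=\gamma=0$. This gives $\v_\alpha([t]_q)=\sigma^1_{\alpha,0,0}\v_0([t]_q)$, where $\v_0([t]_q)=(1,[t]_q,[t]_q^2,\ldots)^T$.

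\textbf{Step 2.} Apply \eqref{result3} with $\alpha=\beta=0$. This gives $\v_0([t]_q)=\sigma^1_{0,0,\gamma}\v_0([t]_q-[\gamma]_q)$, which is the binomial computation preceding Corollary~\ref{thm5}.

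\textbf{Step 3.} Apply \eqref{result3} with $\alpha=\gamma=0$, but with the variable shifted: I need $\v_0(x)=\sigma^1_{0,\beta,0}\v_\beta(x)$ at $x=[t]_q-[\gamma]_q$.

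Substituting gives $\v_\alpha([t]_q)=\sigma^1_{\alpha,0,0}\sigma^1_{0,0,\gamma}\sigma^1_{0,\beta,0}\v_\beta([t]_q-[\gamma]_q)$. Since products of lower triangular matrices involve only finite sums, associativity is harmless here.

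\textbf{Main obstacle: Step 3.} Theorem~\ref{thm2} is stated for the argument $[t]_q$, not for an arbitrary value $x$ such as $[t]_q-[\gamma]_q$. I plan to argue that \eqref{result1} is a polynomial identity in the variable $[t]_q$, both sides being polynomials in $[t]_q$. It holds for infinitely many distinct values of $[t]_q$ (all real $t$, with $q\neq1$ fixed), so it holds identically in an indeterminate $x$. Equivalently, Corollary~\ref{thm4} shows the entries $q^{-\beta\binom{k}{2}}w^2_{(\beta)}[n,k]_q$ express $x^n$ in the basis $\prod_i(x-[\beta i]_q)$ for any $x$. Once $x$ is free, substituting $x=[t]_q-[\gamma]_q$ is legitimate. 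The same remark justifies Step 2 as an identity in a free variable.

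For \eqref{finalresult2} I follow the reverse chain using \eqref{result4}:
\[
\v_\beta([t]_q)\ \longrightarrow\ \v_0([t]_q)\ \longrightarrow\ \v_0([t]_q+[\gamma]_q)\ \longrightarrow\ \v_\alpha([t]_q+[\gamma]_q).
\]
The three factors come from \eqref{result4} with $\alpha=\gamma=0$, giving $\sigma^2_{0,\beta,0}$; then with $\alpha=\beta=0$, giving $\sigma^2_{0,0,\gamma}$ as in Corollary~\ref{thm5}; then with $\beta=\gamma=0$ evaluated at the shifted argument, giving $\sigma^2_{\alpha,0,0}$. Composing them yields $\v_\beta([t]_q)=\sigma^2_{0,\beta,0}\sigma^2_{0,0,\gamma}\sigma^2_{\alpha,0,0}\v_\alpha([t]_q+[\gamma]_q)$. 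Comparing with \eqref{result4} and invoking uniqueness of the transition matrix gives \eqref{finalresult2}.
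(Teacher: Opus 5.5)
Your proposal is correct and follows the paper's proof essentially step for step: the same chain $\v_\alpha([t]_q)\to\v_0([t]_q)\to\v_0([t]_q-[\gamma]_q)\to\v_\beta([t]_q-[\gamma]_q)$ (and its reverse for $\sigma^2_{\alpha,\beta,\gamma}$), closed off by uniqueness of the lower triangular transition matrix between polynomial bases in $[t]_q$. Your polynomial-identity argument for evaluating the $\alpha=\gamma=0$ relation at the shifted argument $[t]_q-[\gamma]_q$ justifies a substitution that the paper makes without comment.
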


\begin{proof}
	By Theorem~\ref{thm2}, Equation~\eqref{result3} gives the following special cases:
	\begin{itemize}
		\item when $\alpha\neq 0$ and $\beta=\gamma=0$,
		\begin{equation}
			\v_{\alpha}\left(\big[t\big]_q\right)
			=
			\sigma_{\alpha,0,0}^1
			\v_{0}\left(\big[t\big]_q\right);
			\label{f1}
		\end{equation}
		\item when $\beta\neq 0$ and $\alpha=\gamma=0$,
		\begin{equation*}
			\v_{0}\left(\big[t\big]_q\right)
			=
			\sigma_{0,\beta,0}^1
			\v_{\beta}\left(\big[t\big]_q\right);
		\end{equation*}
		\item when $\alpha=\beta=0$ and $\gamma\neq 0$,
		\begin{equation*}
			\v_{0}\left(\big[t\big]_q\right)
			=
			\sigma_{0,0,\gamma}^1
			\v_{0}\left(\big[t\big]_q-\big[\gamma\big]_q\right).
		\end{equation*}
	\end{itemize}
	Combining these relations with Equation~\eqref{f1}, we obtain
	\begin{align*}
		\v_{\alpha}\left(\big[t\big]_q\right)
		&=
		\sigma_{\alpha,0,0}^1
		\v_{0}\left(\big[t\big]_q\right)\\
		&=
		\sigma_{\alpha,0,0}^1
		\sigma_{0,0,\gamma}^1
		\v_{0}\left(\big[t\big]_q-\big[\gamma\big]_q\right)\\
		&=
		\sigma_{\alpha,0,0}^1
		\sigma_{0,0,\gamma}^1
		\sigma_{0,\beta,0}^1
		\v_{\beta}\left(\big[t\big]_q-\big[\gamma\big]_q\right).
	\end{align*}
	On the other hand, Equation~\eqref{result3} gives
	\begin{equation*}
		\v_{\alpha}\left(\big[t\big]_q\right)
		=
		\sigma_{\alpha,\beta,\gamma}^1
		\v_{\beta}\left(\big[t\big]_q-\big[\gamma\big]_q\right).
	\end{equation*}
	Therefore,
	\begin{equation*}
		\sigma_{\alpha,\beta,\gamma}^1
		\v_{\beta}\left(\big[t\big]_q-\big[\gamma\big]_q\right)
		=
		\sigma_{\alpha,0,0}^1
		\sigma_{0,0,\gamma}^1
		\sigma_{0,\beta,0}^1
		\v_{\beta}\left(\big[t\big]_q-\big[\gamma\big]_q\right).
	\end{equation*}
	Since the entries of
	$\v_{\beta}\left(\big[t\big]_q-\big[\gamma\big]_q\right)$
	form a polynomial basis in the variable $\big[t\big]_q$, and all the matrices involved are lower triangular transformation matrices, the representing matrix is unique. Hence,
	\begin{equation*}
		\sigma_{\alpha,\beta,\gamma}^1
		=
		\sigma_{\alpha,0,0}^1
		\sigma_{0,0,\gamma}^1
		\sigma_{0,\beta,0}^1.
	\end{equation*}
	
	Similarly, by Theorem~\ref{thm2}, Equation~\eqref{result4} gives the following special cases:
	\begin{itemize}
		\item when $\beta\neq 0$ and $\alpha=\gamma=0$,
		\begin{equation}
			\v_{\beta}\left(\big[t\big]_q\right)
			=
			\sigma_{0,\beta,0}^2
			\v_{0}\left(\big[t\big]_q\right);
			\label{f2}
		\end{equation}
		\item when $\alpha\neq 0$ and $\beta=\gamma=0$,
		\begin{equation*}
			\v_{0}\left(\big[t\big]_q\right)
			=
			\sigma_{\alpha,0,0}^2
			\v_{\alpha}\left(\big[t\big]_q\right);
		\end{equation*}
		\item when $\alpha=\beta=0$ and $\gamma\neq 0$,
		\begin{equation*}
			\v_{0}\left(\big[t\big]_q\right)
			=
			\sigma_{0,0,\gamma}^2
			\v_{0}\left(\big[t\big]_q+\big[\gamma\big]_q\right).
		\end{equation*}
	\end{itemize}
	Using Equation~\eqref{f2}, we obtain
	\begin{align*}
		\v_{\beta}\left(\big[t\big]_q\right)
		&=
		\sigma_{0,\beta,0}^2
		\v_{0}\left(\big[t\big]_q\right)\\
		&=
		\sigma_{0,\beta,0}^2
		\sigma_{0,0,\gamma}^2
		\v_{0}\left(\big[t\big]_q+\big[\gamma\big]_q\right)\\
		&=
		\sigma_{0,\beta,0}^2
		\sigma_{0,0,\gamma}^2
		\sigma_{\alpha,0,0}^2
		\v_{\alpha}\left(\big[t\big]_q+\big[\gamma\big]_q\right).
	\end{align*}
	On the other hand, Equation~\eqref{result4} gives
	\begin{equation*}
		\v_{\beta}\left(\big[t\big]_q\right)
		=
		\sigma_{\alpha,\beta,\gamma}^2
		\v_{\alpha}\left(\big[t\big]_q+\big[\gamma\big]_q\right).
	\end{equation*}
	Consequently,
	\begin{equation*}
		\sigma_{\alpha,\beta,\gamma}^2
		\v_{\alpha}\left(\big[t\big]_q+\big[\gamma\big]_q\right)
		=
		\sigma_{0,\beta,0}^2
		\sigma_{0,0,\gamma}^2
		\sigma_{\alpha,0,0}^2
		\v_{\alpha}\left(\big[t\big]_q+\big[\gamma\big]_q\right).
	\end{equation*}
	Since the entries of
	$\v_{\alpha}\left(\big[t\big]_q+\big[\gamma\big]_q\right)$
	form a polynomial basis in the variable $\big[t\big]_q$, and all the matrices involved are lower triangular transformation matrices, the representing matrix is unique. Hence,
	\begin{equation*}
		\sigma_{\alpha,\beta,\gamma}^2
		=
		\sigma_{0,\beta,0}^2
		\sigma_{0,0,\gamma}^2
		\sigma_{\alpha,0,0}^2.
	\end{equation*}
	This completes the proof.
\end{proof}

The matrix decompositions established in Theorem~\ref{thm6} provide a compact, explicit, and nonrecursive representation of the unified generalized $q$-Stirling numbers. As shown in Equation~\eqref{finalresult1}, the unified generalized $q$-Stirling matrix $\sigma^1_{\alpha,\beta,\gamma}$ is expressed as the product of the matrices $\sigma^1_{\alpha,0,0}$, $\sigma^1_{0,0,\gamma}$, and $\sigma^1_{0,\beta,0}$. Similarly, Equation~\eqref{finalresult2} expresses the unified generalized $q$-Stirling matrix $\sigma^2_{\alpha,\beta,\gamma}$ as the product of the matrices $\sigma^2_{0,\beta,0}$, $\sigma^2_{0,0,\gamma}$, and $\sigma^2_{\alpha,0,0}$.

\section{Conclusion}

In this paper, we introduced the unified generalized $q$-Stirling matrices associated with the unified generalized $q$-Stirling numbers and established their fundamental matrix relations. Using these relations and the corresponding special cases, we derived matrix decompositions of $\sigma^1_{\alpha,\beta,\gamma}$ and $\sigma^2_{\alpha,\beta,\gamma}$ into products of matrices corresponding to simpler parameter configurations. These results extend the matrix decomposition approach of Pan \cite{Pan} to the unified generalized $q$-Stirling setting and provide a convenient matrix framework for further investigations of their algebraic and combinatorial properties.

The matrix method introduced by Pan \cite{Pan} has also been extended to other settings. Bent-Usman et al. \cite{Wardah} extended this approach to the $q$-analogue setting in their study of the $q$-Ruci\'nski--Voigt numbers. More recently, Kabirun and Montero \cite{Kabirun} investigated the matrix decomposition of the $r$-Whitney numbers of both kinds and established properties of the associated generalized factorial matrices, particularly the triangular matrix factors of their inverses. The decomposition formulas obtained in their work bear similarities to those previously reported by Mangontarum et al. \cite{Mangontarum5}. These related results provide additional context for the matrix decompositions established in the present study.

\end{document}